\providecommand{\U}[1]{\protect\rule{.1in}{.1in}}
\newtheorem{theorem}{Theorem}
\newtheorem{corollary}{Corollary}
\newtheorem{lemma}{Lemma}
\newtheorem{proposition}{Proposition}
\newenvironment{proof}[1][Proof]{\noindent\textbf{#1.} }{\ \rule{0.5em}{0.5em}}
\begin{document}

\title{Remarks on profinite groups having few open subgroups}
\author{Dan Segal}
\maketitle

A profinite group is \emph{small} if for each $n\in\mathbb{N}$ it has only
finitely many open subgroups of index at most $n$.

Every finitely generated profinite group is small. Small groups also arise in
number theory: if $S$ is a finite set of primes and $K$ is the maximal
algebraic extension of $\mathbb{Q}$ unramified outside $S$ then $\mathrm{Gal}%
(K/\mathbb{Q})$ is a small profinite group (\cite{K}, Theorem 1.48) (whether
all such Galois groups are in fact finitely generated is apparently an open problem).

If $G$ is a finitely generated profinite group, then (a) every subgroup of
finite index is open and (b) every power subgroup $G^{m}$ is open (\cite{NS1},
\cite{NS2}; for better proofs see also \cite{NS3}); here $G^{m}=\left\langle
g^{m}\mid g\in G\right\rangle $ denotes the subgroup generated
\emph{algebraically} (not topologically) by all $m$th powers in $G$.

If (a) holds one says that $G$ is \emph{strongly complete}. If (b) holds I
will say that $G$ is \emph{power-open}. It is clear that (b) implies (a).

We shall see below that every strongly complete group is small. A small group
need be neither strongly complete nor power-open; the purpose of this note
(which is largely a recap of known results) is to explore some connections
between these various concepts, in particular, to what extent they can be
`algebraically defined'. Writing
\[
\mathcal{F}(G)=\{G/N\mid N\vartriangleleft_{o}G\}
\]
to denote the family of all continuous finite quotients of a profinite group
$P$, I will say that a property of $P$ is \emph{algebraically defined} if it
can be stated in terms of some purely group-theoretic property of the groups
in $\mathcal{F}(P)$ -- this may seem slightly vague but will be clear in the
cases discussed below.

Significant contibutions are due to Nikolay Nikolov and John Wilson; thanks to
both for allowing me to quote some unpublished results.

I will use the following notation. For subset $X$ of a group,%
\[
X^{\ast n}=\{x_{1}x_{2}\ldots x_{n}\mid x_{1},x_{2},\ldots,x_{n}\in X\}.
\]
For a group word $w$ on $k$ variables,%
\[
G_{w}=\{w(\mathbf{g})^{\pm1}\mid\mathbf{g}\in G^{(k)}\},~~w(G)=\left\langle
G_{w}\right\rangle ;
\]
and for $m\in\mathbb{N}$, $G_{\{m\}}=\{g^{m}\mid g\in G\}$, $G^{m}%
=\left\langle G_{\{m\}}\right\rangle $.

$\overline{X}$ denotes the closure of a subset $X$ in a profinite group $G$.
We write $N\vartriangleleft_{o}G$ to mean: $N$ is an open normal subgroup of
$G$.

The word $w$ has \emph{width} $f$ in $G$ if $w(G)=G_{w}^{\ast f}$, and
infinite width if this holds for no finite $f$. We recall that in a profinite
group $G$, the subgroup $w(G)$ is closed if and only if $w$ has finite width
in $G$; this holds if and only if $w$ has bounded width in $\mathcal{F}(G)$
(see \cite{S}, Section 4.1). If $w(G)$ has countable index in $G$ then $w(G)$
is open, hence has finite index (\cite{SW}, Lemma 2).

\section{Examples}

In the proof of \cite{N2}, Theorem 4, Nikolov introduces a general method for
constructing groups with large verbal width. The basic idea is summed up in
the next lemma.

For a group $B$ let $\mathcal{S}_{n}(B)$ denote the set of all $n$-generator
subgroups of $B$.

\begin{lemma}
\label{wordexample}Let $w$ be a word in $k$ variables and let $G=M\rtimes B$
be a semi-direct product with $w(M)=1$. Suppose that for each $H\in
\mathcal{S}_{km}(B)$ of $B$ we have $M=A_{H}\times D_{H}$ with $[A_{H},H]=1$
and $[D_{H},H]\leq D_{H}$. Then for any $\mathbf{g}_{1},\ldots,\mathbf{g}%
_{m}\in G^{(k)}$ there exists $H\in\mathcal{S}_{km}(B)$ such that
\begin{equation}
\prod\nolimits_{1}^{m}w(\mathbf{g}_{i})^{\pm1}\in D_{H}\cdot H. \label{(1)}%
\end{equation}

\end{lemma}

This is clear: take $H=\left\langle b_{ij}\mid i=1,\ldots,m,~j=1,\ldots
,k\right\rangle $ where $g_{ij}\in Mb_{ij}$, $b_{ij}\in B$, and observe that%
\[
w(\mathbf{g}_{i})\in w(A_{H}\times(D_{H}\cdot H)).
\]
Now suppose that%
\[
w(G)\supseteq M\neq\bigcup\limits_{H\in\mathcal{S}_{km}(B)}D_{H}.
\]
Then some element of $M$ is not of the form (\ref{(1)}), and it follows that
$w$ does not have width $m$ in $G$.

\begin{proposition}
Let $\pi$ be a non-empty set of primes with infinite complement. There exists
a metabelian small profinite group $G$ such that $G/G^{\prime}G^{p}$ is
infinite iff $p\in\pi$. Also $G$ is not strongly complete and $G^{\prime}$ is
not closed.
\end{proposition}

\begin{proof}
For distinct primes $p$ and $q$ we construct a finite group $G_{p,q}$ as
follows. Set $B=B_{q}=C_{q}^{(4q)}$ (the elementary abelian group of order
$q^{4q}$), and for $H\leq B$ let $A_{H}$ be the $\mathbb{F}_{p}B$-module
$(B-1)\mathbb{F}_{p}B/(H-1)\mathbb{F}_{p}B$. Note that $A_{H}(H-1)=0$ and
$A_{H}(B-1)=A_{H}$, since $p\neq q$ implies that $(B-1)\mathbb{F}_{p}B$ is an
idempotent ideal.

Put
\begin{align*}
M_{p,q}  &  =\bigoplus\limits_{H\in\mathcal{S}_{3q}(B)}A_{H}\\
G_{p,q}  &  =M_{p,q}\rtimes B_{q}.
\end{align*}
Note that%
\[
G_{p,q}^{\prime}=[M_{p,q},G_{p,q}]=M_{p,q}.
\]
Writing $D_{H}=\bigoplus\limits_{H\neq L\in\mathcal{S}_{3q}(B)}A_{L}$ we see
that Lemma \ref{wordexample} applies for the word $w_{p}=[x,y]z^{p}$, and
infer that this word does not have width $q$ in $G_{p,q}$.

Now partition $\pi^{\prime}$ (the set of primes complementary to $\pi$) into
$\left\vert \pi\right\vert $ infinite subsets $\sigma(p)$ $(p\in\pi)$. Set%
\[
G=\prod\limits_{p\in\pi,q\in\sigma(p)}G_{p,q}\text{.}%
\]
If $p\in\pi$ then $w_{p}$ does not have width $q$ in $G_{p,q}$, hence also not
in $G$, for every $q\in\sigma(p)$. So $w_{p}$ has infinite width in $G$. It
follows that $w_{p}(G)=G^{\prime}G^{p}$ is not closed, and therefore has
uncountable index in $G$.

If $r\in\pi^{\prime}$ then $G_{\{r\}}$ contains $\prod\limits_{p\in\pi,r\neq
q\in\sigma(p)}G_{p,q}$ so $G^{r}$ is open and $G/G^{^{\prime}}G^{r}\cong
B_{r}$ is finite.

If $q\nmid m\in\mathbb{N}$ then $G_{p,q}^{m}\geq\left\langle B_{q}^{G_{p,q}%
}\right\rangle =G_{p,q}$. It follows that
\[
\overline{G^{m}}\geq\prod\limits_{p\in\pi,q\in\sigma(p),q\nmid m}G_{p,q}%
\]
and hence that $\overline{G^{m}}$ is open. It follows trivially (see Theorem
\ref{small} below) that $G$ is small.

If $p\in\pi$ then the same argument shows that $\overline{G^{p}}=G$, while $G$
has infinitely many normal subgroups of index $p;$ none of these is open so
$G$ is not strongly complete. Finally, if $G^{\prime}$ were closed then
$G^{\prime}G^{p}=G^{\prime}G_{\{p\}}$ would be closed, being the product of
two compact subsets of $G$, whence $G=\overline{G^{p}}\leq G^{\prime}G^{p}$.
This is false for $p\in\pi$, so $G^{\prime}$ is not closed. (This may seem
counter-intuitive since at first glance one expects $G^{\prime}$ to be the
product of the $M_{p,q}$: the point is that an element of $M_{p,q}$ may be the
product of about $4q$ commutators, and an infinite product of such elements
may fail to be a product of finitely many commutators in $G$.)
\end{proof}

\bigskip

The next example is taken from \cite{N2}, Theorem 4. For any group $S$ we
denote by $\mathcal{V}_{S}$ the group variety generated by $S$ (the class of
all groups that satisfy all laws of $S$). If $S$ is finite then $\mathcal{V}%
_{S}$ is finitely based, by the Oates-Powell Theorem (see \cite{HN}, 52.12).
It follows that $\mathcal{V}_{S}$ can be defined by a single word, $w_{S}$.
Then for any group $G$, the corresponding verbal subgroup is $\mathcal{V}%
_{S}(G)=w_{S}(G)$.

A finite group is \emph{anabelian} if it has no abelian composition factors. A
profinite group $G$ is anabelian if $G/N$ is anabelian for every open normal
subgroup $N$ of $G$.

\begin{proposition}
\label{anabex}Let $S$ be a non-abelian finite simple group of exponent $m$.
There exists an anabelian small profinite group $G$ such that neither
$\mathcal{V}_{S}(G)$ nor $G^{m}$ is closed. $G$ is not strongly complete.
\end{proposition}

\begin{proof}
Say $w_{S}$ is a word on $k$ variables. Let $(T_{n})_{n\in\mathbb{N}}$ be a
sequence of finite non-abelian simple groups of strictly increasing exponents,
all exceeding $m$\ (for example, large alternating groups). Since the free
group $F_{kn}$ has only finitely many normal subgroups of index $\left\vert
T_{n}\right\vert $, there exists $r(n)$ such that $T_{n}^{(r(n))}$ cannot be
generated by $kn$ elements. Put $B_{n}=T_{n}^{(r(n))},$ for each
$H\in\mathcal{S}_{kn}(B_{n})$ let $\Omega_{H}$ be the $B_{n}$-set $H\setminus
B_{n}$, and let $M_{H}=S^{\Omega_{H}}$, a $B_{n}$-group where $B_{n}$ acts by
permuting the factors.

Let $M_{n}=\prod\limits_{H\in\mathcal{S}_{kn}(B_{n})}M_{H}$ (direct product)
and set
\[
G_{n}=M_{n}\rtimes B_{n}=S\wr_{\Omega}B_{n},
\]
the permutational wreath product where $\Omega$ is the disjoint union of the
transitive $G$-sets $\Omega_{H}$.

Let $H\in\mathcal{S}_{kn}(B_{n})$. Then $M_{H}=A_{H}\times C_{H}$ where
$A_{H}\cong S$ is the factor corresponding to $H$ in $\Omega_{H}$ and $C_{H}$
is the product of the remaining factors, and the conditions of Lemma
\ref{wordexample} are fulfilled on putting $D_{H}=C_{H}\times%
{\textstyle\prod\nolimits_{L\neq H}}
M_{L}$, both for $w=w_{S}$ and for $w=x^{m}$. Also%
\[
w_{S}(G_{n})\geq G_{n}^{m}\geq\left\langle (B_{n}^{m})^{G_{n}}\right\rangle
=\left\langle B_{n}{}^{G_{n}}\right\rangle =G_{n}%
\]
(the final equality holds because for each $H$ we have $\left\vert \Omega
_{H}\right\vert \geq2$ and $S$ is perfect).

We conclude that $w_{S}$ does not have width $n,$ and $x^{m}$ does not have
width $kn$, in $G_{n}$. Hence each of these words has infinite width in%
\[
G=\prod\limits_{n=1}^{\infty}G_{n}\text{,}%
\]
and so neither $\mathcal{V}_{S}(G)=w_{S}(G)$ nor $G^{m}$ is closed.

Let $q\in\mathbb{N}$. Then $T_{n}^{q}=T_{n}$ for all but finitely many $n$. As
above it follows that $G_{n}^{q}=G_{n}$ for all but finitely many $n$, and
hence (as above) that $\overline{G^{q}}$ is open in $G$, and finally that $G$
is therefore small.

That $G$ is not strongly complete follows from Theorem \ref{SW}, below.
\end{proof}

\bigskip

Different examples of small but not strongly complete groups were given in
\cite{N}, Proposition 27 and in an earlier version of this note.

\section{Small groups}

Write $s_{n}(G)$ to denote the number of (open) subgroups of index at most $n$
in a (pro)finite group $G.$ Thus a profinite group $P$ is small if and only if
$s_{n}(P)$ is finite for each $n$; this is equivalent to the statement: there
is a function $f:\mathbb{N}\rightarrow\mathbb{N}$ such that $s_{n}(G)\leq
f(n)$ for every $G\in\mathcal{F}(P)$ and all $n$.

\begin{theorem}
\label{small}A profinite group $P$ is small if and only if $\overline{P^{m}%
}\vartriangleleft_{o}P$ for every $m\in\mathbb{N}$.
\end{theorem}

Thus $P$ is small if and only if for each $m\in\mathbb{N}$ there exists $k(m)$
such that%
\begin{equation}
\forall Q\in\mathcal{F}(P):\left\vert Q/Q^{m}\right\vert \leq k(m).
\label{exptcond}%
\end{equation}
Equivalently: $\mathcal{F}(P)$ contains only finitely many groups of exponent
$m$.

This has a curious number-theoretic interpretation: with Chebotarev's Theorem
(\cite{K}, Theorem 1.116) it yields

\begin{corollary}
Let $S$ be a finite set of primes and let $m\in\mathbb{N}$. Then there are
only finitely many finite Galois extensions $K$ of $\mathbb{Q}$ such that
\emph{(1)} all primes ramified in $K$ are in $S$ and \emph{(2)} almost all
primes have residue degree at most $m$ in $K$.
\end{corollary}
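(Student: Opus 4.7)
The plan is to realize each extension $K$ as in the corollary as a finite quotient of a fixed profinite group $G$ to which Theorem~\ref{T2} applies, and then to exploit Chebotarev's theorem to show that all the relevant quotients are in fact quotients of a single finite group.

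First I would let $\mathbb{Q}_S$ denote the maximal algebraic extension of $\mathbb{Q}$ unramified outside $S$ and set $G = \mathrm{Gal}(\mathbb{Q}_S/\mathbb{Q})$. By the result of \cite{K} cited in the paragraph before the corollary, $G$ has only finitely many open subgroups of each finite index. Each extension $K$ satisfying condition (1) is a subfield of $\mathbb{Q}_S$, so by Galois theory it corresponds to an open normal subgroup $N_K$ of $G$ with $G/N_K \cong \mathrm{Gal}(K/\mathbb{Q})$; thus it suffices to bound the number of such $N_K$.

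The key step is to translate condition (2) into a group-theoretic statement via Chebotarev. For all but finitely many primes $p$, the prime is unramified in $K$, and its residue degree equals the order of any Frobenius element above $p$ in $\mathrm{Gal}(K/\mathbb{Q})$. By Chebotarev (\cite{K}, Theorem 1.116) every conjugacy class of $\mathrm{Gal}(K/\mathbb{Q})$ is represented as a Frobenius class with positive density, so it contains the Frobenius of infinitely many unramified primes. Consequently, if every conjugacy class were to consist of elements of order $\le m$, i.e.\ if every element of $\mathrm{Gal}(K/\mathbb{Q})$ had order $\le m$, then almost-all residue degrees would be $\le m$; conversely, condition (2) forces exactly this. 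Setting $M = \mathrm{lcm}(1,2,\dots,m)$, it follows that $(G/N_K)^M = 1$, and hence $G^M \le N_K$; since $N_K$ is closed, $\overline{G^M} \le N_K$.

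Finally I invoke Theorem~\ref{T2} applied to $G$ with exponent $M$: $\overline{G^M}$ is open in $G$, so $G/\overline{G^M}$ is finite. Therefore there are only finitely many (open, normal) subgroups of $G$ containing $\overline{G^M}$, giving only finitely many possibilities for $N_K$ and hence for $K$. The main point requiring care is the Chebotarev step, namely formulating it so that the density statement really does force every group element (not merely every element in some specific subset) to have order bounded by $m$; once that is done the rest is essentially a one-line deduction from Theorem~\ref{T2}.
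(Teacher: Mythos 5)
Your proposal is correct and follows exactly the route the paper intends: take $G=\mathrm{Gal}(\mathbb{Q}_S/\mathbb{Q})$, use Chebotarev to convert condition (2) into the statement that $\mathrm{Gal}(K/\mathbb{Q})$ has exponent dividing $\mathrm{lcm}(1,\dots,m)$, and then apply Theorem~\ref{T2} to conclude that all such $K$ lie in the fixed field of the open subgroup $\overline{G^{M}}$. The paper gives only a one-line indication of this argument, and your write-up (including the care taken that positive density of each Frobenius class forces every element, not just some, to have order at most $m$) fills it in correctly.
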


In one direction, Theorem \ref{small} is obvious: every open subgroup of index
at most $n$ contains $\overline{P^{m}}$ where $m=n!$, so $s_{n}(P)\leq
s_{n}(P/\overline{P^{m}})<\infty$.

The other direction lies deeper; it generalizes the positive solution to the
Restricted Burnside Problem, which can be formulated as the statement:
$\overline{F^{m}}\vartriangleleft_{o}F$ for every $m\in\mathbb{N}$ when $F$ is
a finitely generated free profinite group.

It is proved in much the same way, bearing in mind the slightly different
hypothesis. Since $\overline{P^{m}}$ is the intersection of all
$N\vartriangleleft_{o}P$ with $P^{m}\leq N$, it will follow from the next
result, on taking $f(n)=s_{n}(P)$:

\begin{theorem}
\label{T3}Let $f:\mathbb{N}\rightarrow\mathbb{N}$ be a function and let
$m\in\mathbb{N}$. If $G$ is a finite group such that $G^{m}=1$ and
$s_{n}(G)\leq f(n)$ for all $n$ then $\left\vert G\right\vert \leq\nu(m,f)$, a
number depending only on $f$ and $m$.
\end{theorem}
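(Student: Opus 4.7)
The plan is to deduce Theorem~\ref{T3} from the positive solution of the restricted Burnside problem (Zelmanov): any finite group of exponent dividing $m$ generated by $d$ elements has order at most some bound $B(d,m)$. Granted this, it suffices to prove a bound $d(G)\le D(m,f)$ depending only on $m$ and $f$, after which one may take $\nu(m,f)=B(D(m,f),m)$.

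To bound $d(G)$, I would use $d(G)=d(G/\Phi(G))$ and pass to the Frattini quotient $\bar G=G/\Phi(G)$, which inherits both hypotheses (exponent dividing $m$, and $s_n(\bar G)\le f(n)$). Thus we may assume $G$ is Frattini-free. The socle of such a $G$ is the direct product of its minimal normal subgroups, each isomorphic to a direct power $S^k$ of a finite simple group $S$ of exponent dividing $m$. By the classification of finite simple groups, only finitely many such $S$ arise, and their orders are bounded by some $s=s(m)$.

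The heart of the proof is then to bound both the multiplicity $k$ of each simple factor and the number of distinct minimal normal subgroups of $G$ in terms of $f$ and $s$. In the abelian case $S=\mathbb{F}_p$, this is straightforward: an elementary abelian minimal normal $M\cong\mathbb{F}_p^{k}$ has $(p^{k}-1)/(p-1)$ hyperplanes, and Gasch\"utz's theorem lifts each to a complement of $M$ in $G$, so $(p^{k}-1)/(p-1)\le s_p(G)\le f(p)$ immediately caps $k$. For non-abelian $S$, one analyses the transitive permutation action of $G$ on the $k$ simple direct factors of $M$: various stabilisers in the image $G\to\mathrm{Aut}(S)\wr S_k$ (point stabilisers, stabilisers of $j$-subsets, stabilisers of partitions) must produce sufficiently many subgroups of $G$ of controlled index to force $k$ to be bounded by a function of $f$ and $s$. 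Once the socle is bounded, standard structure theory of Frattini-free finite groups (using $C_G(\mathrm{soc}(G))\subseteq\mathrm{soc}(G)$, a consequence of the generalised Fitting subgroup theory) bounds $|G|$, and therefore $d(G)$.

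The main obstacle is the quantitative step in the non-abelian case: extracting from $s_n(G)\le f(n)$ an explicit lower bound on the number of subgroups of $G$ of index a specified function of $|S|$ and $k$, arising from the wreath-product action on the socle. This is the analogue in the present setting of the reduction to prime-power exponent in the classical proof of the restricted Burnside problem, and CFSG enters essentially both to enumerate the possible non-abelian simple composition factors $S$ of $G$ of exponent dividing $m$ and to control their automorphism groups.
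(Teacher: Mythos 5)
Your overall strategy---reduce to bounding $d(G)$ via the Frattini quotient and then invoke the full restricted Burnside problem---is a natural one, and the reduction to the Frattini-free case is sound: both hypotheses do pass to $G/\Phi(G)$. But the central step, bounding the socle of a Frattini-free group by subgroup counting, has genuine gaps. In the abelian case the hyperplane argument does not work: if $M\cong\mathbb{F}_p^k$ is a minimal normal subgroup with $k>1$, then $G$ acts irreducibly on $M$, a hyperplane $W<M$ is not normal in $G$, and there is no subgroup of index $p$ in $G$ attached to $W$ (Gasch\"utz's splitting theorem produces complements of $M$, which have index $p^k$, not $p$, and whose number is governed by cohomology, not by hyperplanes). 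Indeed for $G=\mathbb{F}_p^k\rtimes Q$ with $Q$ a Singer cycle one has $s_p(G)=1$ while $k$ is large; here it is the \emph{exponent} hypothesis, not $s_p(G)\le f(p)$, that limits $k$, which your sketch does not use at this point. In the non-abelian case the difficulty is similar but worse: the stabilisers you propose to count have index roughly $k$, $\binom{k}{j}$, etc., so the inequality $s_n(G)\le f(n)$ evaluated at $n\approx k$ gives $k\le f(k)$, which is vacuous; and the ``drop-one-factor'' subgroups of $M=S^k$ have index $|S|\cdot[G:M]$ in $G$, where $[G:M]$ is not yet controlled---so the bound you want is circular.

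The paper avoids exactly this circularity by a different reduction: it uses the Hall--Higman theorem to bound the \emph{height} $h(G)$ (the length of a normal series with nilpotent-or-semisimple factors) purely in terms of $m$, and then inducts on height. The key observation making the induction work is that if $H\trianglelefteq G$ with $|G/H|\le c$, then $s_n(H)\le s_{nc}(G)\le f(nc)$, so the hypothesis transfers cleanly down a normal series. The base cases are precisely the two where subgroup counting \emph{does} cap the rank: for a $p$-group, $G'G^p=\Phi(G)$ so every hyperplane of $G/\Phi(G)$ really does give an index-$p$ subgroup of $G$, and for a semisimple group $G=\prod S_i^{(c_i)}$ the ``drop-one-factor'' subgroups have index exactly $|S_i|$ since $G$ is its own socle. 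Your approach would need Hall--Higman (or an equivalent) anyway to escape the circularity, at which point you essentially recover the paper's argument; as written, the socle-bounding step does not go through.
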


For the rest of this section all groups will be finite. For a group $G$ let
$h^{\ast}(G)$ denote the minimal lengthof a chain of normal subgroups
$1=G_{0}\leq G_{1}<\ldots<G_{n}=G$ such that each factor $G_{i}/G_{i-1}$ is
either nilpotent or semisimple (here, a \emph{semisimple} \emph{group} means a
direct product of non-abelian simple groups). Classic results of Hall and
Higman, recalled in Section \ref{genfit} below, imply

\begin{theorem}
\label{hh} If $G^{m}=1$ then $h^{\ast}(G)\leq\eta(m)$, a number depending only
on $m$.
\end{theorem}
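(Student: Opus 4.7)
The plan is to induct on the number $\omega(m)$ of distinct prime divisors of $m$, following the original reduction strategy of Hall and Higman. The base case $\omega(m)=1$ is immediate: if $m=p^{a}$ is a prime power, then $G^{m}=1$ forces $G$ to be a $p$-group, hence nilpotent, so $h(G)\le 1$.

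For the inductive step I would split the argument by first passing to the solvable radical $R=\mathrm{Rad}(G)$. The solvable case of the Hall--Higman theorem bounds the Fitting length of $R$ by a function $\phi(m)$ depending only on $m$: one inducts on $\omega(m)$ and uses at each stage Hall--Higman's Theorem B to control the $p$-length of a $p$-solvable section by the exponent of its Sylow $p$-subgroup. The terms of the Fitting series of $R$ are characteristic in $R$, hence normal in $G$, and contribute at most $\phi(m)$ nilpotent factors. Next I would handle $\bar G=G/R$, which has trivial solvable radical, so every minimal normal subgroup of $\bar G$ is non-abelian. Its socle $E/R$ is therefore a direct product of non-abelian simple groups, giving one semisimple factor and having trivial centre. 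A standard argument now yields $C_{\bar G}(E/R)=1$, so $\bar G$ embeds in $\mathrm{Aut}(E/R)$. By Schreier's conjecture (a consequence of the classification of finite simple groups), $\mathrm{Out}(T)$ is solvable for every non-abelian simple $T$, so $\bar G/(E/R)$ is a solvable group of exponent dividing $m$. Applying the solvable Hall--Higman bound once more yields at most $\phi(m)$ further nilpotent factors. Concatenating, $h(G)\le 2\phi(m)+1=:\eta(m)$.

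The main obstacle is the classical Hall--Higman bound on the Fitting length of solvable groups of exponent $m$; its proof uses Hall--Higman's Theorem B, a delicate analysis of a $p'$-element of prime-power order acting faithfully on an elementary abelian $p$-group, whose critical case concerns Jordan blocks of size $p^{a}$ and involves cyclotomic number theory. Invoking Schreier's conjecture to handle the non-solvable quotient is a convenient shortcut, though not strictly necessary: one can alternatively bound the orders of the non-abelian simple composition factors of $G$ by a function of $m$ via the classification. Either route relies on genuinely deep input, but given these ingredients the reduction itself is a fairly routine assembly.
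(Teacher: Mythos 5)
Your reduction collapses at the step where you claim that $\bar G/(E/R)$ is solvable. From $C_{\bar G}(E/R)=1$ you correctly get an embedding $\bar G \hookrightarrow \mathrm{Aut}(E/R)$, hence $\bar G/(E/R)\hookrightarrow \mathrm{Out}(E/R)$. But when the socle $E/R$ is a product $T_{1}^{k_{1}}\times\cdots\times T_{r}^{k_{r}}$ with some $k_{i}>1$, the group $\mathrm{Out}(E/R)$ is \emph{not} $\prod_{i}\mathrm{Out}(T_{i})^{k_{i}}$: it is an extension of that solvable group by the permutation group $\prod_{i}S_{k_{i}}$ acting on the isomorphic simple factors, and this top piece is not controlled by Schreier's conjecture. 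So solvability of each $\mathrm{Out}(T)$ does not give solvability of $\bar G/(E/R)$. A concrete counterexample to your claim is $G=A_{5}\wr A_{5}$ (base $A_{5}^{(5)}$, top $A_{5}$ permuting the coordinates): here $\mathrm{Rad}(G)=1$, $E=\mathrm{soc}(G)=A_{5}^{(5)}$, $G$ has exponent dividing $900$, yet $G/E\cong A_{5}$ is simple non-abelian. Your argument would try to absorb this quotient into a ``solvable Hall--Higman'' layer, which it is not.

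This is not a cosmetic slip: controlling the permutation action on the simple factors of successive socles is precisely the substantive content of the reduction in Hall--Higman's Section~4 (Theorem~4.4.1), which the paper cites in place of a self-contained argument, alongside Theorem~A for the solvable part and Schreier's conjecture for the $\mathrm{Out}(T)$ layers. You would need either to reproduce that machinery or to supply a different device (e.g.\ showing that a chain of ``tight'' semisimple layers forces the exponent to grow, so that the iteration socle-of-quotient terminates after boundedly many steps) before the bound $h(G)\le 2\phi(m)+1$ can be salvaged; as written it is too optimistic. The solvable-radical step and the use of Schreier for the $\prod\mathrm{Out}(T_{i})$ layer are fine, and your base case is correct.
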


(Take $\eta(m)=2\delta(m)$ in Theorem \ref{hhbound}.)

Now let $G$ be a group satisfying the hypotheses of Theorem \ref{T3}.

\medskip

\noindent\emph{Case 1}. Suppose that $\left\vert G\right\vert =p^{e}$ for some
prime $p$, and that $\left\vert G/G^{\prime}G^{p}\right\vert =p^{d}$. Then
$p^{d-1}\leq s_{p}(G)\leq f(p)$ so $d\leq\lambda(p):=\left\lceil 1+\log
_{p}f(p)\right\rceil $. Now $G$ can be generated by $d$ elements, and then
Zelmanov's theorem \cite{Z1}, \cite{Z2} gives $\left\vert G\right\vert
\leq\beta(\lambda(p),m)$, a number depending only on $f(p)$ and $m$.

$\smallskip$

\noindent\emph{Case 2}. Suppose that $G$ is nilpotent. Say $m=p_{1}^{e_{1}%
}\ldots p_{r}^{e_{r}}$. Then from Case 1 we see that%
\[
\left\vert G\right\vert \leq\prod_{i=1}^{r}\beta(\lambda(p_{i}),m):=\nu
_{\mathrm{nil}}(m,f).
\]

\noindent\emph{Case 3}. Suppose that $G$ is semisimple. The result of
\cite{J}, with CFSG, shows that there are only finitely many non-abelian
simple groups $S$ such that $S^{m}=1$; call them $S_{1},\ldots,S_{k}$ and put
$t_{i}=\left\vert S_{i}\right\vert $. Now $G\cong\prod S_{i}^{(c_{i})}$ for
some $c_{i}\geq0$. Clearly $c_{i}\leq s_{t_{i}}(G)\leq f(t_{i})$ for each $i$,
and so%
\[
\left\vert G\right\vert \leq\prod_{i=1}^{k}t_{i}^{f(t_{i})}:=\nu_{\mathrm{ss}%
}(m,f).
\]

So far, we have shown that if $h^{\ast}(G)=1$ then%
\[
\left\vert G\right\vert \leq\max\{\nu_{\mathrm{nil}}(m,f),~\nu_{\mathrm{ss}%
}(m,f)\}:=\nu_{1}(m,f),
\]
say. Now let $q>1$ and suppose inductively that for each $h<q$, and every
function $g$, we have found a number $\nu_{h}(m,g)$ such that for any group
$H$ satisfying $h^{\ast}(H)\leq h$, $H^{m}=1$ and $s_{n}(H)\leq g(n)$ for all
$n$ we have $\left\vert H\right\vert \leq\nu_{h}(m,g)$.

Define%
\[
\nu_{q}(m,f)=\nu_{1}(m,f)\cdot\nu_{q-1}(m,g_{m,f})
\]
where $g_{m,f}(n)=f(n.\nu_{1}(m,f))$. Suppose that $G$ with $G^{m}=1$ satifies
$s_{n}(G)\leq f(n)$ for all $n$ and that $h^{\ast}(G)\leq q$. Thus $G$ has a
normal subgroup $H$ with $h^{\ast}(H)\leq q-1$ such that $G/H$ is either
nilpotent or semisimple. Then $\left\vert G/H\right\vert \leq\nu_{1}(m,f)$,
and so for each $n$ we have
\[
s_{n}(H)\leq s_{n.\nu_{1}(m,f)}(G)\leq g_{m,f}(n).
\]
Therefore $\left\vert H\right\vert \leq\nu_{q-1}(m,g_{m,f})$, whence
$\left\vert G\right\vert =\left\vert H\right\vert \left\vert G/H\right\vert
\leq\nu_{q}(m,f)$.

Finally, set%
\[
\nu(m,f)=\nu_{\eta(m)}(m,f).
\]
If $G$ satisfies the hypotheses of Theorem \ref{T3} then $h^{\ast}(G)\leq
\eta(m)$ by Theorem \ref{hh} and so $\left\vert G\right\vert \leq\nu(m,f)$ as required.

\section{Strongly complete groups}

The property of being small is inherently `algebraically defined', in terms of
the subgroup-growth functions $s_{n}(G)$, and more succinctly in the remark
following Theorem \ref{small}. The definition of `strongly complete', on the
other hand, refers directly to non-open subgroups, which by their nature are
undetectable in the continuous finite quotients of a profinite group. The
following characterization, due to Smith and Wilson, is therefore remarkable.

An \emph{f-variety} is the group variety generated by a finite group.

\begin{theorem}
\label{SW}\emph{(\cite{SW}, Theorem 2)} A profinite group $G$ is strongly
complete if and only if $\mathcal{V}(G)\vartriangleleft_{o}G$ for every
f-variety $\mathcal{V}$.
\end{theorem}

If $N\leq G$ and $\left\vert G/N\right\vert =m$ then $N\geq\mathcal{V}(G)$
where $\mathcal{V}=\mathcal{V}_{\mathrm{Sym}(m)}$, so $s_{m}(G)=s_{m}%
(G/\mathcal{V}(G))$; thus we have

\begin{corollary}
Every strongly complete profinite group is small.
\end{corollary}

See also \cite{P}, Theorem 2.4, where this was first proved using an
ultrafilter construction.

Now $\mathcal{V}(G)$ is open if and only it is both closed and has finite
index in $G$. If $\mathcal{V=V}_{S}$ for a finite group $S$, it is defined by
a word $w_{S}$; let us call such a word an \emph{f-word}. Then $\mathcal{V}%
(G)$ is closed in $G$ if and only if $w_{S}$ has finite width in $G$; in that
case,
\[
\left\vert G:\mathcal{V}(G)\right\vert =\left\vert G:\overline{\mathcal{V}%
(G)}\right\vert =\sup_{Q\in\mathcal{F}(G)}\left\vert Q:w_{S}(Q)\right\vert .
\]

Thus we have the `algebraic definition': $G$ is strongly complete if and only
if for each f-word $w$ there exists $k(w)\in\mathbb{N}$ such that
\begin{equation}
\forall Q\in\mathcal{F}(G):\left\vert Q/w(Q)\right\vert \leq k(w)\text{ and
}w\text{ has width }k(w)\text{ in }Q. \label{verbalchar}%
\end{equation}

Smith and Wilson (loc.cit) establish another characterization, which is not
algebraic in my sense but nicely clarifies the relation between `strongly
complete' and `small': $G$ \emph{is strongly complete if and only if }$G$
\emph{has finitely many subgroups of each finite index, and this holds if and
only if} $G$ \emph{has only countably many subgroups of finite index}.

Now (\ref{verbalchar}) looks like a strengthening of (\ref{exptcond}), except
that the power words $x^{m}$ are not (usually) f-words, because infinite
Burnside groups exist. Could we use power words instead of f-words here? The
question has some plausibility because every \emph{finitely generated}
profinite group is indeed power-open (the power subgroups $G^{m}$ are open,
\cite{NS2}). On the other hand, it is clear that every power-open profinite
group is strongly complete.

\medskip

\textbf{Question} \textbf{1. }\emph{Is every strongly complete profinite group
power-open?}

\medskip

If so, we can replace the f-words $w$ in (\ref{verbalchar}) by the power words
$x^{m}$, $m\in\mathbb{N}$.

The following reduction was pointed out to me by John Wilson:

\begin{proposition}
\label{jsw reduction}\emph{(J. S. Wilson)} Suppose that $G$ is strongly
complete. If $H^{q}\vartriangleleft_{o}H$ for every $H\vartriangleleft_{o}G$
and every prime-power $q\mid m$ then $G^{m}\vartriangleleft_{o}G$.
\end{proposition}

\begin{proof}
There are only finitely many finite simple groups of exponent dividing $m,$
say $S_{1},\ldots,S_{t}$ (\cite{J}+CFSG). Let $\mathcal{V}$ denote the variety
generated by $S_{1}\times\cdots\times S_{t}$.

Let $\eta(m)$ be the number given by Theorem \ref{hh}, so every finite group
of exponent dividing $m$ has a normal series of length $\eta(m)$ with each
factor either semisimple or nilpotent. Let $k=\eta(m)s$, where $m$ is
divisible by $s$ primes. Then every finite group of exponent dividing $m$ has
a normal series of length $k$ with each factor either in $\mathcal{V}$ or of
exponent $q$ for some prime-power $q\mid m$. It follows by a standard inverse
limit argument that every locally finite group of exponent dividing $m$ has
such a normal series. Now the main theorem of \cite{NS2} implies that
$G/G^{m}$ is locally finite; hence there is a normal series%
\[
G=G_{0}\geq G_{1}\geq\cdots\geq G_{k}=G^{m}%
\]
such that for each $i$, either $\mathcal{V}(G_{i})\leq G_{i+1}$ or $G_{i}%
^{q}\leq G_{i+1}$ for some prime-power $q\mid m$.

Let $i\in\{0,\ldots,k\}$ be maximal such that $G_{i}$ is open in $G$. Suppose
that $i<k$. Then $G_{i}$ is again strongly complete, so if $\mathcal{V}%
(G_{i+1})\leq G_{i}$ then $G_{i+1}\vartriangleleft_{o}G_{i}$ by Theorem
\ref{SW}, whence $G_{i+1}\vartriangleleft_{o}G$, contradiction. If
$G_{i}/G_{i+1}$ has exponent $q$ for some prime-power $q\mid m$ then
$G_{i+1}\vartriangleleft_{o}G_{i}$ by hypothesis, whence $G_{i+1}%
\vartriangleleft_{o}G$, again a contradiction. It follows that $i=k$ and so
$G^{m}\vartriangleleft_{o}G$.
\end{proof}

\bigskip

Thus it will suffice to answer Question 1 for \emph{prime-power} subgroups. In
some cases this is feasible:

\begin{theorem}
\label{anabpowers}\emph{(N. Nikolov) Let }$G$ be an anabelian profinite group.
Then $G=G^{q}$ for every prime-power $q$.
\end{theorem}

\begin{proof}
Suppose that $q$ is odd. Theorem 3 of \cite{N2} says that the word $x^{q}$ has
bounded width $l(q)$ in every finite anabelian group. In unpublished work
(personal communication), Nikolov proves the same statement for $q$ any power
of $2$. It follows in either case that $x^{q}$ has finite width $l(q)$ in $G$,
whence $G^{q}$ is closed. Now if $G^{q}\leq N\vartriangleleft_{o}G$ then $G/N$
is a finite anabelian group of prime-power order, whence $N=G$. the result follows.
\end{proof}

\bigskip

With Proposition \ref{jsw reduction} this gives

\begin{theorem}
Let $G$ be an anabelian profinite group. Then $G$ is strongly complete if and
only if $G$ is power-open.
\end{theorem}

At the other extreme we could consider prosoluble groups. Question 1 is still
open in this case, but the following may be relevant:

\begin{lemma}
Suppose that $G$ is strongly complete and prosoluble. Let $q=p^{n},$ $p$ a
prime, and let $P$ be a Sylow pro-$p$ subgroup of $G$. If $G^{q}$ is not
closed then $P_{1}:=P\cap\overline{G^{q}}\vartriangleleft_{o}P$ and $P_{1}$
has an infinite perfect quotient $P_{1}/(P\cap G^{q})$.
\end{lemma}

\begin{proof}
$\overline{G^{q}}\vartriangleleft_{o}G$ by Theorem \ref{small}, so
$P_{1}\vartriangleleft_{o}P$. Now $G$ has a Hall pro-$p^{\prime}$-subgroup
$H$, and $G^{q}P\geq HP=G$. So $\overline{G^{q}}=G^{q}P_{1}$ and so
\[
\frac{P_{1}}{P\cap G^{q}}=\frac{P_{1}}{P_{1}\cap G^{q}}\cong\frac
{\overline{G^{q}}}{G^{q}}.
\]
The latter is infinite and perfect, because $\overline{G^{q}}$ is strongly
complete and an abelian group of finite exponent is residually finite.
\end{proof}

\bigskip

Thus a negative answer to Question 1 would imply a positive answer to

\medskip

\textbf{Question 2.} \emph{Does there exist a pro-}$p$\emph{ group with a
nontrivial perfect quotient}?

\medskip

This is apparently unkown; the answer is probably `yes', but it seems quite hard.

To summarize some of the above:

\begin{theorem}
Let $G$ be a profinite group. The following conditions are equivalent to $G$
being strongly complete.

\emph{i.} if $G$ is a \emph{pro-}$p$ group: $G$ is finitely generated; or, $G$
is small; or, $\overline{G^{\prime}G^{p}}$ is open; or, $G^{\prime}G^{p}$ is open;

\emph{ii.} if $G$ is \emph{pronilpotent}: $G$ is small; or, each Sylow
subgroup of $G$ is finitely generated;

\emph{iii.} if $G$ is \emph{prosoluble}: $H^{\prime}H^{p}\vartriangleleft
_{o}H$ for every $H\vartriangleleft_{o}G$ and every prime $p$;

\emph{iv. }if $G$ is \emph{anabelian}: $G^{m}$ is open for every
$m\in\mathbb{N}$; or, $\left\vert G/G^{m}\right\vert $ is finite for every
$m\in\mathbb{N}$.
\end{theorem}

\begin{proof}
Most of this appears above, or follows easily. Let me sketch the argument for
(iii), where $G$ is prosoluble. Note that $H^{\prime}H^{p}=w(H)$ where
$w=[x,y]z^{p}$ define the variety generated by $C_{p}$, so if $G$ is strongly
complete and $H\vartriangleleft_{o}G$ then $H$ is strongly complete and $w(H)$
is open by Theorem \ref{SW}. For the converse, suppose that $G$ is not
strongly comlete and let $N$ be a normal subgroup of $G$ of minimal finite
index such that $N$ is not open. Then $G/N$ is a finite soluble group, by
Hall's characterization of finite soluble groups as those having a Hall
$p^{\prime}$-subgroup for every prime $p$: indeed, if $Q$ is a Hall
pro-$p^{\prime}$ subgroup of $G$ then $QN/N$ is a Hall $p^{\prime}$-subgroup
of $G/N$. Now let $H/N$ be a minimal normal subgroup of $G/N$. Then
$H\vartriangleleft_{o}G$ and $H^{\prime}H^{p}\leq N<H$ for some prime $p$; so
$H^{\prime}H^{p}$ is not open in $H$.
\end{proof}

\bigskip

\textbf{Remark} Theorem \ref{SW} does have a direct analogue for small groups:

\begin{theorem}
\label{easysmall}A profinite group $G$ is small if and only if $\overline
{\mathcal{V}(G)}\vartriangleleft_{o}G$ for every f-variety $\mathcal{V}$.
\end{theorem}

Of course this is an immediate corollary of Theorem \ref{small}, since if
$\mathcal{V=V}_{Q}$ where $Q$ has exponent $m$ then $G^{m}\leq\mathcal{V}(G)$.
However it is worth mentioning because it is completely elementary. To prove
it directly we argue exactly as in the proof of Theorem \ref{small}, quoting
Proposition \ref{fvht} (see Section \ref{genfit} below) in place of Theorem
\ref{hh}.

\section{The `congruence kernel'}

Let $G$ be a profinite group. Considered as an abstract group, $G$ has a
profinite completion $\widehat{G}$, and the identity map on $G$ induces a
natural continuous epimorphism $\pi:\widehat{G}\rightarrow G$.

The `congruence kernel' of $G$ is $C(G)=\ker\pi$. Note that $C(G)$ is the
projective limit%
\[
C(G)=\underset{\leftarrow}{\lim}\,\overline{N}/N
\]
where $N$ runs over normal subgroups of finite index in $G$. Thus $G$ is
strongly complete if and only if $C(G)=1$.

\begin{theorem}
\label{t1}If $C(G)$ is small then $G$ is strongly complete.
\end{theorem}

\begin{proof}
Assume that $C=C(G)$ is small. First we prove that $G$ is small.

Suppose for a contradiction that $G$ has infinitely many open normal subgroups
of index $n$. It is then easy to see that there exist an open normal subgroup
$H$ of $G$, a finite simple group $Q$ of order $\leq n$ and a continuous
epimorphism $\pi:H\rightarrow P=Q^{\mathbb{N}}$. For each non-principal
ultrafilter $\mathcal{U}$ on $\mathbb{N}$ let $\psi_{\mathcal{U}}:H\rightarrow
Q$ be the induced map onto the ultrapower $P/\mathcal{U}\cong Q$, and set
$K_{\mathcal{U}}=\ker\psi_{\mathcal{U}}$. Note that $K_{\mathcal{U}}$ contains
$\pi^{-1}(P_{0})$ where $P_{0}$ is the restricted direct power of $Q$ inside
$P$; if $S$ is any finite collection of non-principal ultrafilters, it follows
that%
\[
K_{S}:=\bigcap_{\mathcal{U}\in S}K_{\mathcal{U}}%
\]
is a dense normal subgroup of finite index in $H$. Thus $K_{S}$ contains a
normal subgroup $N$ of finite index in $G$ and $K_{S}\overline{N}=H$.
Therefore $H/K_{S}\cong\overline{N}/(\overline{N}\cap K_{S})$ is a continuous
image of $C$; say $H/K_{S}\cong C/M$ where $M$ is open and normal in $C$.

Let $\mathcal{V}$ be the variety generated by $Q$. Then $\overline
{\mathcal{V}(C)}\vartriangleleft_{o}C$ by Theorem \ref{easysmall}. Now
$\mathcal{V}(H)\leq K_{S}$ so $\overline{\mathcal{V}(C)}\leq M$ and so
$\left\vert H/K_{S}\right\vert \leq\left\vert C:\overline{\mathcal{V}%
(C)}\right\vert <\infty$. Choosing the set $S$ so as to maximize $\left\vert
H/K_{S}\right\vert $, we see that $K_{\mathcal{U}}\geq K_{S}$ for every
non-principal ultrafilter $\mathcal{U}$. Thus there are only finitely many
possibilities for $K_{\mathcal{U}}$.

Now it is easy to see that $K_{\mathcal{U}}$ determines $\mathcal{U}$; indeed,
for $V\subseteq\mathbb{N}$ we have%
\[
V\in\mathcal{U}\Longleftrightarrow K_{\mathcal{U}}\supseteq\pi^{-1}\left\{
f:\mathbb{N}\rightarrow Q\mid f(V)=\{1\}\right\}  .
\]
But the number of non-principal ultrafilters is infinite, so we have our contradiction.

Now fix an f-variety $\mathcal{V}$ and put $W=\mathcal{V}(G)$. If $W\leq
N\vartriangleleft_{f}G$ then $\overline{N}/N$ is a continuous image of $C$,
and as above we may infer that $\left\vert \overline{N}/N\right\vert
\leq\left\vert C:\overline{\mathcal{V}(C)}\right\vert <\infty$. We choose such
an $N$ so as to maximize $\left\vert \overline{N}/N\right\vert $.

Suppose that $W\leq M\vartriangleleft_{f}G$. Put $D=N\cap M$. Then
$\overline{D}N=\overline{N}$ so $\left\vert \overline{D}/(N\cap\overline
{D})\right\vert =\left\vert \overline{N}/N\right\vert $ and as $N\cap
\overline{D}\geq D$ it follows that $N\cap\overline{D}=D$. There are countably
many possibilities for $\overline{D}$, since $G$ is small; and given $D$,
there are finitely many possibilities for $M$. Thus there are countably many
possibilities for $M$.

Since there are countably many f-varieties it follows that $G$ has countably
many normal subgroups of finite index. The result follows by \cite{SW},
Theorem 2.
\end{proof}

\section{Generalized Fitting height: a reminder\label{genfit}}

In this section all groups are finite. The \emph{generalized Fitting subgroup}
of a group $G$ is $F^{\ast}(G)=FE$ where $F=F(G)$ is the Fitting subgroup and
$E=E(G)$ is the largest quasi-semisimple normal subgroup of $G$ (to say that
$E$ is \emph{quasi-semisimple} means that $E$ is perfect and $E/Z(E)$ is a
product of simple groups); $E$ is more usually defined as the subgroup
generated by the components of $G$, the quasisimple subnormal subgroups (that
this is equivalent is a small exercise). It is alway the case that $F\cap
E=\mathrm{Z}(E)$ and $E/\mathrm{Z}(E)$ is semisimple; see \cite{A}, Chapter
11. Thus $F^{\ast}/F$ is semisimple.

The \emph{generalized Fitting height }$h(G)$ of $G$ is defined by:
\[
h(1)=0,\qquad h(G)=1+h(G/F^{\ast}(G)).
\]
It is not hard to see that $h(G)$ is the minimal length of a series of normal
subgroups from $1$ to $G$ such that each factor is the product of a nilpotent
normal subgroup and a quasi-semisimple normal subgroup; it follows that $h$ is
sub-additive on group extensions.

The first result is elementary. For a group $Q$ the variety generated by $Q$
is denoted $\mathcal{V}_{Q}$.

\begin{proposition}
\label{fvht}For each finite group $Q$ there is an integer $m(Q)$ such that
$G\in\mathcal{V}_{Q}$ implies $h(G)\leq m(Q)$.
\end{proposition}

\begin{proof}
We define $m(Q)$ recursively: set $m(1)=0$ and suppose that $m(L)$ has been
found for every group $L$ with $\left\vert L\right\vert <\left\vert
Q\right\vert $.

If $G$ is a finite group in $\mathcal{V}_{Q}$ then $G$ is a section of
$Q^{(n)}$ for some finite $n$, so $h(G)\leq h(H)$ where $H\leq Q^{(n)}$. It
will suffice to find an upper bound for $h(H)$.

Let $M$ be a maximal normal subgroup of $Q$ and put $X=H\cap M^{(n)}$. Then
$X\in\mathcal{V}_{M}$ and $H/X\cong HM^{(n)}/M^{(n)}\in\mathcal{V}_{Q/M}$, so
if $M>1$ we have%
\[
h(H)\leq h(H/X)+h(X)\leq m(Q/M)+m(M).
\]
Thus if $Q$ is not simple we may define $m(Q)$ to be the infimum of
$m(Q/M)+m(M)$ where $M$ ranges over the maximal normal subgroups of $Q$.

Now suppose that $Q$ is simple. Write $\pi_{i}:H\rightarrow Q$ for the
projection to the $i$th factor in the product and set $L_{i}=\ker\pi_{i}$. Say
$H\pi_{i}=Q$ for $1\leq i\leq r$ and $H\pi_{i}=T_{i}<Q$ for $r<i\leq n$ (here
$r$ may be $0$ or $n$). Put $X=L_{1}\cap\ldots\cap L_{r}$. Then $H/X\cong
Q^{(t)}$ for some $t\leq r$ and $X\leq P:=T_{r+1}\times\cdots\times T_{n}$.

Now let $a=\max\{h(T)\mid T<Q\}$. Then $P$ has a series of normal subgroups
$1=A_{0}\leq B_{1}\leq A_{1}\leq\cdots\leq B_{a}\leq A_{a}=P$ with
$B_{i}/A_{i-1}$ nilpotent and $A_{i}/B_{i}$ semisimple. Say $S_{1}%
,\ldots,S_{s}$ are all the non-abelian composition factors of proper subgroups
of $Q$. Then%
\[
B_{i}=B_{i0}\leq B_{i1}\leq\cdots\leq B_{is}=A_{i}%
\]
where each $B_{ij}$ is normal in $P$ and $B_{ij}/B_{i(j-1)}\cong
S_{j}^{(n_{ij})}$. Intersecting with $X$ we obtain a normal series%
\[
\ldots A_{i-1}\cap X\leq B_{i}\cap X=X_{i0}\leq X_{i1}\leq\cdots\leq
X_{is}=A_{i}\cap X\ldots
\]
such that $(B_{i}\cap X)/(A_{i-1}\cap X)$ is nilpotent and
\[
\frac{X_{ij}}{X_{i(j-1)}}\cong\frac{B_{i(j-1)}(X\cap B_{ij})}{B_{i(j-1)}}%
\in\mathcal{V}(S_{j}),
\]
for each $i$ and $j$.

It follows that
\[
h((A_{i}\cap X)/(A_{i-1}\cap X))\leq1+m(S_{1})+\cdots+m(S_{s})=b
\]
say, and hence that $h(X)\leq ab$. As $H/X$ is semisimple we may therefore
define $m(Q)=1+ab$.
\end{proof}

\bigskip

The next result is not elementary: it depends on CFSG -- more precisely, it
needs the Schreier Conjecture and the Odd Order Theorem. It also depends on
the Hall-Higman Theorem (which it more or less implies, in a weak sense).

\begin{theorem}
\label{hhbound}For each $q\in\mathbb{N}$ there is an integer $\delta(q)$ such
that $G^{q}=1$ implies $h(G)\leq\delta(q)$.
\end{theorem}

\begin{proof}
Setting $\delta(1)=0$ we may suppose that $q>1$ and that $\delta(q^{\prime})$
has been defined for all $q^{\prime}<q$. Let $G$ be a group satisfying
$G^{q}=1$.

If $q$ is a prime power then $G$ is nilpotent and $h(G)\leq1$. Otherwise, let
$p$ be an odd prime divisor of $q=p^{e}r$ where $p\nmid r$.

Suppose first that $G$ is soluble. According to Theorem A of \cite{HH}, $G$
has $p$-length $l\leq2e+1$; so $G$ has a normal series%
\[
1=P_{0}\leq N_{0}<P_{1}<\cdots<P_{l}\leq N_{l}=G
\]
with each $P_{i}/N_{i-1}$ a $p$-group and $N_{i}^{r}\leq P_{i}$. It follows
that%
\[
h(G)\leq l(1+\delta(r)).
\]

Next, suppose that $\mathrm{Fit}(G)=1$ and let $M=F^{\ast}(G)$. Then
$M=S_{1}\times\cdots\times S_{n}$ is a product of non-abelian simple groups.
Let $L$ be the kernel of the induced permutation action of $G$ on the set
$\{S_{1},,\ldots,S_{n}\}$. Since $\mathrm{C}_{G}(M)=1$ (because $\mathrm{Fit}%
(G)=1$) we see that $L/M$ embeds into $\mathrm{Out}(S_{1})\times\cdots
\times\mathrm{Out}(S_{n})$, whence $L/M$ is soluble by the Schreier Conjecture.

The Odd Order Theorem ensures that $S_{1}$ has even order, and hence that $q$
is even. A simple argument, given below, shows that $G^{q/2}\leq L$. It
follows that%
\[
h(G)\leq1+l(1+\delta(r))+\delta(q/2).
\]

In general, let $H$ be the soluble radical of $G$. Then $\mathrm{Fit}(G/H)=1.$
Applying the two previous cases we deduce that $h(G)\leq\delta(q)$ where%
\[
\delta(q)=1+2l(1+\delta(r))+\delta(q/2).
\]

\emph{Proof that} $G^{q/2}\leq L$ (copied from the proof of \cite{HH}, Theorem
4.4.1). Suppose that the claim is false. Say $2^{e}=t$ exactly divides $q$.
Then there exists $g\in G$ with $g^{2^{e}}=1$ and $g^{2^{e-1}}\notin L$. Thus
$g$ has order $t$ modulo $L$. Hence $g$ in its conjugation action has a cycle
of length $t$ on $\{S_{1},,\ldots,S_{n}\}$, say $(S_{1},,\ldots,S_{t})$. Let
$x\in S_{1}$ be an element of order $2$. Then $S_{1}^{(xg)^{i}}=S_{1+i}$
centralizes $S_{1}$ for $1\leq i<t$, so for $h\in S_{1}$ we have%
\[
h^{(xg)^{t}}=h^{xg.g^{t-1}}=h^{x}.
\]
Choosing $h\in S_{1}\smallsetminus\mathrm{C}_{S_{1}}(x)$ we infer that
$(xg)^{t}\neq1$. But $(xg)^{t}=(x,x^{g},\ldots,x^{g^{t-1}})\in S_{1}%
\times\cdots\times S_{t}$ is an element of order $2$, so the order of $xg$ is
exactly $2t$; this contradicts $x^{q}=1$.
\end{proof}

\bigskip

It is not known whether the generalized Fitting height of all finite groups in
an arbitrary non-trivial variety is uniformly bounded; it would suffice to
settle this for soluble groups: see \cite{Kh}, Problem 2, Theorem 6, Theorem 7.


\begin{thebibliography}{999}                                                                                              %


\bibitem[A]{A}M. Aschbacher, \emph{Finite group theory, }CUP , 1986\emph{.}

\bibitem[GLS]{GLS}D. Gorenstein, R. Lyons and R. Solomon, \emph{The
classification of the finite simple groups, no.3}, American Math. Soc.,
Providence, Rhode Island, 1998.

\bibitem[HH]{HH}P. Hall and G. Higman, On the $p$-length of $p$-soluble groups
and reduction theorems for Burnside's problem, Proc. London Math. Soc. (3)
\textbf{6} (1956), 1-42.

\bibitem[HN]{HN}Hanna Neumann, \emph{Varieties of groups, }Springer-Verlag,
Berlin -Heidelberg-New York, 1967\emph{.}

\bibitem[Kh]{Kh}E. I. Khukhro, Problems of bounding the $p$-length and Fitting
height of finite soluble groups, \emph{Journal of Siberian Federal University.
Mathematics \& Physics} \textbf{2}(3) (2009), 258--270.

\bibitem[K]{K}H. Koch, \emph{Algebraic Number Theory, }Springer-Verlag, Berlin
Heidelberg 1997.

\bibitem[J]{J}G. A. Jones, Varieties and simple groups, \emph{J. Austral.
Math. Soc}. \textbf{17} (1974), 163--173.

\bibitem[N]{N}N. Nikolov, Algebraic properties of profinite groups, arXiv:1108.5130.

\bibitem[N2]{N2}N. Nikolov, Verbal width in anabelian groups, \emph{Israel J.
Math.} \textbf{216} (2016), 847-876.

\bibitem[NS1]{NS1}N. Nikolov and D. Segal, On finitely generated profinite
groups, I: strong completeness and uniform bounds, \emph{Annals of Math.
}\textbf{165} (2007), 171--238.

\bibitem[NS2]{NS2}N. Nikolov and D. Segal, Powers in finite groups,
\emph{Groups, Geometry and Dynamics} \textbf{5} (2011), 501-507.

\bibitem[NS3]{NS3}N. Nikolov and D. Segal, Generators and commutators in
finite groups; abstract quotients of compact groups, \emph{Invent. Math.
}\textbf{190} (2012), 513-602.

\bibitem[P]{P}H. L. Peterson, Discontinuous characters and subgroups of finite
index, \emph{Pacific J. Math.} \textbf{44} (1973), 683-691.

\bibitem[S]{S}D. Segal, \emph{Words: notes on verbal width in groups, }London
Math. Soc. Lecture Notes Series \textbf{361}, Cambridge Univ. Press,
Cambridge, 2009.

\bibitem[SW]{SW}M. G. Smith and J. S. Wilson, On subgroups of finite index in
compact Hausdorff groups, \emph{Arch. Math. }\textbf{80} (2003), 123-129.

\bibitem[Z1]{Z1}E. I. Zelmanov, The solution of the restricted Burnside
problem for groups of odd exponent, \emph{Math. USSR Izv. }\textbf{36} (1991), 41--60.

\bibitem[Z2]{Z2}E. I. Zelmanov, The solution of the restricted Burnside
problem for $2$-groups, \emph{Mat. Sb.} \textbf{182} (1991), 568--592.
\end{thebibliography}
\end{document}